\def\BibTeX{{\rm B\kern-.05em{\sc i\kern-.025em b}\kern-.08em
    T\kern-.1667em\lower.7ex\hbox{E}\kern-.125emX}}
\newtheorem{theorem}{Theorem}
\newtheorem{lemma}{Lemma}
\newtheorem{definition}{Definition}
\newtheorem{remark}{Remark}
\newtheorem{assum}{Assumption}
\title{\LARGE \bf
Stochastic Model Predictive Control for tracking of distributed linear systems with additive uncertainty
}
\author{Christoph Mark and Steven Liu
\thanks{Institute  of  Control  Systems,  Department  of  Electrical  and  Computer
Engineering, University of Kaiserslautern, Erwin-Schrödinger-Str. 12, 67663
Kaiserslautern, Germany, {\tt\small mark|sliu@eit.uni-kl.de}}%
}
\begin{document}

\maketitle
\thispagestyle{empty}
\pagestyle{empty}

\begin{abstract}
In this paper, we propose a chance constrained stochastic model predictive control scheme for reference tracking of distributed linear time-invariant systems with additive stochastic uncertainty. The chance constraints are reformulated analytically based on mean-variance information, where we design suitable Probabilistic Reachable Sets for constraint tightening. Furthermore, the chance constraints are proven to be satisfied in closed-loop operation. The design of an invariant set for tracking complements the controller and ensures convergence to arbitrary admissible reference points, while a conditional initialization scheme provides the fundamental property of recursive feasibility. The paper closes with a numerical example, highlighting the convergence to changing output references and empirical constraint satisfaction.
\end{abstract}


\section{Introduction}
\thispagestyle{FirstPage}
Model Predictive Control (MPC) is an optimization based control strategy, that uses a model of a dynamical system to predict the system states into the future. The strength of MPC lies in the ability to compute optimal control inputs subject to state and input constraints \cite{rawlings2017model}. In most of the MPC literature the authors consider the regulation task, that is, the system is steered to the origin or an a-priori known setpoint. If this setpoint changes during online operation, the underlying optimization problem may becomes infeasible, which is due to the fact that the MPC for regulation is designed for steady-state operation \cite{ferramosca2013cooperative}. A promising approach that tackles the aforementioned issue is provided by \cite{limon2008mpc}, where convergence of the closed-loop system is guaranteed under any changing output reference.

In the presence of uncertainty, the literature distinguishes between stochastic \cite{mesbah2016stochastic} and robust \cite{mayne2005robust} approaches, where the latter assumes a bounded disturbance, such that constraints can be satisfied robustly. However, if this bound is large, the resulting feasible region of the MPC can be very conservative. Stochastic MPC (SMPC) addresses this issue and utilizes the probability distribution of the disturbance, which allows for a relaxation of the hard constraints to hold as chance constraints, i.e. with a certain probability. SMPC can roughly be separated into \textit{randomized methods} \cite{schildbach2012randomized, muntwiler2020datadriven}, where the stochastic control problem is approximated via a sampling-average-approximation by simulating disturbance scenarios, or \textit{analytical approximation methods} \cite{mark2019distributed, hewing2018stochastic, farina2013probabilistic}, where we utilize the knowledge of the moments and/or probability distribution to reformulate the chance constraints e.g. via concentration inequalities.  

Most of the existing work on MPC is done in a centralized setting \cite{rawlings2017model}, that is, the plant is modeled as a single unit that is controlled by a single controller. However, if the plant represents a large-scale network of dynamical systems, then centralized MPCs quickly become intractable. To resolve this issue, the control task is distributed over several agents, which leads to distributed MPC (DMPC) \cite{christofides2013distributed}.

\subsubsection*{Similar work}
The authors of \cite{conte2013cooperative} propose a nominal DMPC for tracking based on the concept of distributed invariance. A distributed terminal set for tracking is used to ensure recursive feasibility of the DMPC. In \cite{ferramosca2013cooperative} the authors propose a sequential nominal DMPC that combines the steady-state optimization (reference governor) and the MPC problem, such that only one optimization needs to be solved online. The authors of \cite{farina2017stochastic} propose a distributed SMPC for independent systems, where the chance constraints are approximated with Cantelli's inequality. Chance constraint satisfaction is only guaranteed in prediction.

\subsubsection*{Contribution}
In this paper we propose a distributed SMPC for tracking of dynamically coupled linear systems with additive stochastic uncertainty. We use an expected value cost function for tracking, which is then analytically reformulated by means of mean and covariance of the state and input sequences. The nominal steady-state optimization problem is included through the cost function, which was similarly done in \cite{limon2008mpc, farina2017stochastic, conte2013cooperative}. Based on the predicted covariance sequences, we compute suitable Probabilistic Reachable Sets (PRS) for constraint tightening, which render the resulting MPC optimization problem as a deterministic quadratic program. In Theorem \ref{thm:main_result} we provide our main result on recursive feasibility, closed-loop chance constraint satisfaction and convergence to arbitrary admissible output references.
\subsubsection*{Outline}
The paper is organized as follows. In Section \ref{sec:preliminaries} we introduce the notation and the system dynamics. In Section \ref{sec:centralSMPC} we define an affine tube controller to treat the stochasticity of the dynamics and characterize nominal steady-states. Furthermore, we formulate the cost function for tracking and reformulate the chance constraints via mean-variance Probabilistic Reachable Sets. Lastly, we introduce the terminal set for tracking, the conditional initialization scheme and the MPC optimization problem. In Section \ref{sec:distributed_synthesis} we give remarks on how to synthesize all controller ingredients distributedly and how to set up the DMPC, while Section \ref{sec:example} is dedicated to a numerical example. The paper closes with some concluding remarks. For the sake of readability, the proofs are delayed in the appendix.
\section{Preliminaries and problem statement}
\label{sec:preliminaries}
\subsection{Notations}
Given two polytopic sets $\mathbb{A}$ and $\mathbb{B}$, the Pontryagin difference is given as $\mathbb{A} \ominus \mathbb{B} = \{ a \in \mathbb{A} : a+b \in \mathbb{A}, \forall b \in \mathbb{B} \}$. Positive definite and semidefinite matrices are indicated as $A>0$ and $A\geq0$, respectively. Given a matrix $A$ and vector $x$, we denote the $j$-th row of $A$ as $[A]_j$, the $i$-th element of the $j$-th row as $[A]_{j,i}$ and the $j$-th element of $x$ as $[x]_j$. The spectral radius of a matrix $A$ is denoted as $\rho(A)$. The weighted 2-norm is $\Vert x \Vert_P = \sqrt{x^\top P x}$.
For an event $E$ we define the probability of occurrence as $\mathbb{P}(E)$, whereas the expected value of a random variable $w$ is given by $\mathbb{E} \{w \}$. Two random variables $x$, $y$ that share
the same distribution are equal in distribution, denoted by $x \overset{d}{=} y$. The set $ \{ 1, ..., M \} \subseteq \mathbb{N} $ is denoted as $\mathcal{M}$. For two vectors $x_1 \in \mathbb{R}^{n_1}$ and $x_2 \in \mathbb{R}^{n_2}$ we denote the stacked vector as $x = \text{col}_{j \in \{1, 2\}} (x_j) \in \mathbb{R}^{n_1 + n_2}$.
\subsection{Stochastic dynamics and Chance Constraints}
In this work we consider a network of $M$ linear time-invariant systems
\begin{subequations}
\begin{align}
	x_i(k+1) &= \sum_{j = 1}^M A_{ij} x_j(k) + B_i u_i(k) + w_i(k) \label{eq:dynamics}\\ 
	y_i(k) &= \sum_{j=1}^M C_{ij} x_j(k) \label{eq:true_output}
\end{align}
\label{eq:real_system}
\end{subequations}
where $x_i \in \mathbb{R}^{n_i}$, $u_i \in \mathbb{R}^{m_i}$ and $y_i \in \mathbb{R}^{l_i}$ denote the state, input and output vectors. We assume that $w_i \in \mathbb{R}^{n_i}$ is a zero-mean random variable that is distributed according to $w_i \sim \mathcal{Q}^{w_i}(0, \Sigma^w_i)$ with known covariance matrix $\Sigma^w_i > 0$. 
\begin{assum}
The distribution $\mathcal{Q}^{w_i}$ is central convex unimodal (CCU) \cite{dharmadhikari1976multivariate} for all $i \in \mathcal{M}$.
\end{assum}
 To ease the notation, we define the dynamic neighborhood of each subsystem.
\begin{definition}[Dynamic neighborhood]
System $j$ is a neighbor of system $i$ if $A_{ij} \neq 0$ and/or $C_{ij} \neq 0$. The set of all neighbors of system $i$, including system $i$ itself, is denoted as $\mathcal{N}_i$. The states of all systems $j \in \mathcal{N}_i$ are denoted as $x_{\mathcal{N}_i} \in \text{col}_{j \in \mathcal{N}_i}(x_j) \in \mathbb{R}^{n_{\mathcal{N}_i}}$.  \label{def:neighboring_systems}
\end{definition}
Thus, the local dynamics \eqref{eq:real_system} are expressed compactly as
\begin{align*}
		x_i(k+1) &= A_{\mathcal{N}_i} x_{\mathcal{N}_i}(k) + B_i u_i(k) + w_i(k)  \\
		y_i(k) &= C_{\mathcal{N}_i} x_{\mathcal{N}_i}(k).
\end{align*}
Furthermore, we impose polytopic state and input chance constraints for any $k \geq 0$
\begin{subequations}
\begin{align}
	&\mathbb{P} \big (  x_{\mathcal{N}_i}(k) \in \mathbb{X}_{\mathcal{N}_i} \coloneqq \{ x_{\mathcal{N}_i} \: | \: H_{\mathcal{N}_i} x_{\mathcal{N}_i} \leq h_{\mathcal{N}_i} \}  \big ) \geq p_{x,i} \label{eq:local_chance_constraints:state}\\
	&\mathbb{P} \big (  u_i(k) \in \mathbb{U}_i \coloneqq \{ u_i \: | \: L_i u_i \leq l_i \} \big ) \geq p_{u,i},
\end{align}
\label{eq:local_chance_constraints}
\end{subequations}
where $h_{\mathcal{N}_i} \in \mathbb{R}^{p_i}$, $l_i \in \mathbb{R}^{q_i}$ and $p_{x,i}, p_{u,i} \in (0,1)$ are the levels of chance constraint satisfaction. In this formulation we can impose local and neighbor-to-neighbor coupled state chance constraints, as well as local input chance constraints. By combining the local dynamics \eqref{eq:real_system} for all $i \in \mathcal{M}$, we obtain the global system
\begin{subequations}
\begin{align}
	x(k+1) &= A x(k) + B u(k) + w(k) \\
	y(k) &= C x(k),  \label{eq:global_system:output}
\end{align}
\label{eq:global_dynamics}
\end{subequations}
where $x \in \mathbb{R}^{n}$, $u \in \mathbb{R}^{m}$, $y \in \mathbb{R}^{l}$, $w \sim \mathcal{Q}(0, \Sigma^w)$ and $\Sigma^w = \text{diag}(\Sigma^w_1, \ldots, \Sigma^w_M)$. We make the following assumption on stabilizability.
\begin{assum}
The pair $(A,B)$ is stabilizable with a structured linear feedback controller
\begin{align*}
\pi(x(k) ) = Kx(k) = \text{col}_{i \in \mathcal{M}}(K_{\mathcal{N}_i} x_{\mathcal{N}_i}(k)),
\end{align*}	
where $K_{\mathcal{N}_i} \in \mathbb{R}^{m_i \times n_{\mathcal{N}_i}}$, such that $\rho(A + BK) < 1$.
\label{assum_stabilizable}
\end{assum}
\begin{remark}
A controller that satisfies Assumption \ref{assum_stabilizable} can be found with \cite[Lemma 10 and Proposition 13]{conte2016distributed}.
\label{rem:stabilizing_controller}
\end{remark}
The Cartesian product of the local constraint sets \eqref{eq:local_chance_constraints} gives us a global representation
\begin{align*}
&\mathbb{X} \coloneqq \{ x_{\mathcal{N}_i} \: | \: H_{\mathcal{N}_i} x_{\mathcal{N}_i} \leq h_{\mathcal{N}_i}, \forall i \in \mathcal{M}\} = \{x \: | \: H x \leq h \}\\
&\mathbb{U} \coloneqq \{ u_i \: | \: L_i u_i \leq l_i , \forall i \in \mathcal{M}\} = \{ u \: | \: L u \leq l \},
\end{align*}
where we make the following assumption:
\begin{assum}
The sets $\mathbb{X}$ and $\mathbb{U}$ are compact.
\label{assum:compact}
\end{assum}
\begin{remark}
\label{rem:compactness}
The compactness of the constraint sets is mandatory for the computation of an ellipsoidal invariant set for tracking, see Section \ref{sec:tracking_set}. If some states or inputs are unconstrained, it is always possible to constrain them with a large, but finite value, such that Assumption \ref{assum:compact} holds.
\end{remark}

The communication graph $\mathcal{G}(\mathcal{V},\mathcal{E})$ of the distributed MPC is induced by the dynamic couplings, where each vertex in $\mathcal{V}$ corresponds to a subsystems $i \in \mathcal{M}$. The edges $\mathcal{E}$ represent the connection between the subsystems according to Definition \ref{def:neighboring_systems}, i.e. $\mathcal{N}_i = \{j | (i,j) \in \mathcal{E} \} \cup \{ i \} $.
\begin{assum}
The communication graph $\mathcal{G}(\mathcal{V},\mathcal{E})$ is bidirectional, i.e. if $j \in \mathcal{N}_i$, then $i \in \mathcal{N}_j$.
\end{assum}
\section{Stochastic MPC for reference tracking}
\label{sec:centralSMPC}
\subsection{Controller structure}
In this work we follow standard procedures in stochastic MPC \cite{mark2020stochastic} and define for each subsystem $i \in \mathcal{M}$ a distributed tube controller according to Assumption \ref{assum_stabilizable}, i.e.
\begin{align}
	u_i(k) = v_i(0|k) + K_{\mathcal{N}_i} ( x_{\mathcal{N}_i}(k) - z_{\mathcal{N}_i}(0|k) ), \label{eq:tube_controller}
\end{align}
where the nominal states and inputs $z_i$, $v_i$ are governed by the dynamic equations 
\begin{subequations}
\begin{align}
	z_i(t+1|k) &= A_{\mathcal{N}_i} z_{\mathcal{N}_i}(t|k) + B_i v_i(t|k) \label{eq:nominal_dynamics} \\
	\bar{y}_i(t|k) &= C_{\mathcal{N}_i} z_{\mathcal{N}_i}(t|k). \label{eq:nominal_output}
\end{align}
\label{eq:local_nominal_dynamics}
\end{subequations}
The nominal input sequences $v_i(t|k)$ for $t \in \{0, \ldots, N-1\}$ are obtained from an MPC optimization problem solved at time step $k$ and $z_i(t|k)$ is the resulting $t$-step ahead prediction of the states. 
Define the error state as $e = x - z$, then it can be shown that the prediction error evolves linearly as
\begin{align}
	e_i(t+1|k) = A_{\mathcal{N}_i,K} e_{\mathcal{N}_i}(t|k) + w_i(t|k), \label{eq:error_state}
\end{align}
where $A_{\mathcal{N}_i,K} = A_{\mathcal{N}_i} + B_i K_{\mathcal{N}_i}$ and $w_i(t|k) \overset{d}{=} w_i(t+k)$.
The corresponding global dynamics are given by
\begin{subequations}
\begin{align}
	z(t+1|k) &= A z(t|k) + B v(t|k) \label{eq:global:nominal_dynamics}\\
	e(t+1|k) &= A_K e(t|k) + w(t|k) \label{eq:global:error} \\
	\bar{y}(t|k) &= C z(t|k). \label{eq:global:expected_output}
\end{align}
\end{subequations}
\subsection{Steady-states}
In order to define a tracking objective we need to characterize the steady-states of the system \eqref{eq:global_dynamics}. However, the persistent exogenous disturbance $w$ only allows for a formulation of a steady-state in expectation, i.e. w.r.t. the nominal dynamics \eqref{eq:global:nominal_dynamics}. The corresponding steady-state condition is given by
\begin{align*}
	z_s(k) = A z_s(k) + B v_s(k),  
\end{align*}
where $(z_s(k),v_s(k))$ denotes the steady-state pair that is consistent with the output $y_s(k) = C z_s(k)$. By combining the above equations we can write the steady-state condition compactly as
\begin{align}
	\begin{bmatrix}
		A-I & B \\
		C 	& 0
	\end{bmatrix} \begin{bmatrix}
		z_s(k) \\	
		v_s(k)
	\end{bmatrix} = 
	\begin{bmatrix}
		0  \\
		y_s(k)
	\end{bmatrix}. \label{eq:steady_state_condition}
	\end{align}
\begin{remark}
The artificial tracking target $y_s(k)$ replaces the actual reference $y_{ref}$. The reason behind this is to allow the MPC to operate with shorter prediction horizon by tracking at each time-step $k$ a $N$-step reachable reference $y_s(k)$ \cite{limon2008mpc}. By introducing a tracking objective, we can steer the artificial tracking target $y_s(k)$ to $y_{ref}$ in an admissible way. With the time dependency of $y_s(k)$ we stress that at each closed-loop time instant $k$, the artificial steady-state pair $(z_s(k),v_s(k))$ is variable.
\end{remark}
\subsection{Objective function}
The objective in stochastic MPC for reference tracking is to steer the output \eqref{eq:global_system:output} in expectation to the desired reference $y_{ref}$. To this end, define the deviation variables $\Delta x = x - z_s$ and $\Delta u = u - v_s$ and the cost function for tracking
\begin{align*}
	J &= J_{MPC}(\Delta x, \Delta u) + J_o(y_s, y_{ref}),
\end{align*}
where the MPC cost is 
\begin{align}
J_{MPC}(\Delta x, \Delta u) &= \mathbb{E} \bigg \{ \Vert \Delta x(N|k)\Vert_P^2 \nonumber  \\
&+  \sum_{t=0}^{N-1} \Vert \Delta x(t|k)\Vert_Q^2 + \Vert \Delta u(t|k) \Vert_R^2 \bigg \} \label{eq:expected_cost}
\end{align}
and the tracking cost is $J_o(y_s, y_{ref}) = \Vert y_s(k) - y_{ref} \Vert_T^2$. The matrices $Q, R, T$ are assumed to be block diagonal weighting matrices for the state, input and output residuals. The block diagonal matrix $P$ is the solution to
\begin{align}
 A_K^\top P A_K - P = -Q - K^\top R K \label{eq:lyapunov_eqn}
\end{align}
with $A_K = A + BK$, which by Assumption \ref{assum_stabilizable} is guaranteed to exist. 
\subsection*{Analytic evaluation}
The MPC cost $J_{MPC}$ can be further evaluated analytically by substituting $\Delta x = z + e - z_s$ and $\Delta u = v + K e - z_s$, i.e.
\begin{align}
	&J_{MPC} = \Vert \Delta z(N|k)\Vert_P^2 + \sum_{t=0}^{N-1} \Vert \Delta z(t|k)\Vert_Q^2 + \Vert \Delta v(t|k) \Vert_R^2 \nonumber \\
	&+ \text{tr}\{ P \Sigma^e(N|k) \} +  \sum_{t=0}^{N-1} \text{tr}\{ (Q + K^\top R K) \Sigma^e(t|k) \}, \label{eq:variance_cost}
\end{align}
where the first line represents the mean part with $\Delta z = \mathbb{E}\{\Delta x \} =  z - z_s$ and $\Delta v = \mathbb{E}\{\Delta u \} = v - v_s$. The second line corresponds to the variance part of the cost, where the covariance sequence $\Sigma^e(t+1|k) = \mathbb{E}\{ e(t+1|k) e(t+1|k)^\top \}$ is obtained from \eqref{eq:global:error}, that is
\begin{align}
	\Sigma^e(t+1|k) = A_K \Sigma^e(t|k) A_K^\top + \Sigma^w. \label{eq:global_covariance}
\end{align}
Note that \eqref{eq:global_covariance} does not depend on the MPC optimization variables $z,v$. Hence, the sequence \eqref{eq:global_covariance} can be computed offline and the variance part can be neglected in the receding horizon cost function
\begin{align}
J &= \Vert \Delta z(N|k)\Vert_P^2 + \sum_{t=0}^{N-1} \Vert \Delta z(t|k)\Vert_Q^2 + \Vert \Delta v(t|k) \Vert_R^2 \nonumber \\
&+ \Vert y_s(k) - y_{ref} \Vert_T^2 \label{eq:nominal_cost_function}.
\end{align}
\subsection{Chance constraint reformulation}
In order to address the chance constraints we make use of PRS for constraint tightening. 
\begin{definition}[Probabilistic $t$-step Reachable Set]
\label{def:t_step_PRS}
A set $\mathcal{R}_t$ with $t \geq 0$ is said to be a $t$-step PRS of probability level $p$ for system \eqref{eq:global:error} if $e(0|k) = 0 \Rightarrow \mathbb{P}( e(t|k) \in \mathcal{R}_t) \geq p$.
\end{definition}
\begin{definition}[Probabilistic Reachable Set]
	\label{def:PRS}
	A set $\mathcal{R}$ is said to be a PRS of probability level $p$ for system \eqref{eq:error_state} if
	\begin{align*}
		e(0|k) = 0 \Rightarrow \mathbb{P}( e(t|k) \in \mathcal{R}) \geq p \quad \forall t \geq 0
	\end{align*}
\end{definition}
In this paper we follow the lines of \cite{mark2020stochastic, hewing2018stochastic} and use a $t$-step mean-variance PRS for the predicted error \eqref{eq:global:error}. Note that due to \eqref{eq:global:error} and $\mathbb{E}\{ w \}=0$ the error $e$ is zero-mean, thus, the PRS is fully characterized by the error covariance \eqref{eq:global_covariance}. By applying Chebyshev's inequality \cite[Thm. 1]{chen2007new} along each dimension $j = 1, \ldots, n$ of $e$, we obtain a deterministic expression for the $t$-step PRS
\begin{align}
	\mathcal{R}_t = \bigg \{ [e]_j\: \bigg| \: \big| [e]_j \big| \leq  \sqrt{\gamma \:[\Sigma^e(t|k)]_{j,j}}, \: \forall j = 1, \ldots, n \bigg \},  \label{eq:PRS:state}
\end{align}
where $\gamma = n / (1 - p_x)$. The input PRS $R_t^u(k)$ is defined analogously by using the input error $e^u = u - v = K e$ with var$(e^u) = \Sigma^u(t|k) = K \Sigma^e(t|k) K^\top$ and $\gamma^u = m / (1 - p_u)$. Note that the probability levels $p_x$ and $p_u$ are used to regulate the levels of chance constraint satisfaction in \eqref{eq:local_chance_constraints}.
\begin{remark}
The bound $\gamma$ holds for arbitrary probability distributions. However, if the disturbance is normally distributed, then $\gamma = \mathcal{X}_{n}^2(p_x)$ yields the tightest probability bound, where $\mathcal{X}_{n}^2(p_x)$ is the quantile function of the Chi-squared distribution at probability level $p_x$ with $n$ degrees of freedom. This similarly holds for $\gamma^u$. \label{remark:quantile}
\end{remark}
Similar to the $t$-step PRS we define a PRS as
\begin{align}
	\mathcal{R} = \bigg \{ [e]_j\: \bigg| \: \big| [e]_j \big| \leq  \sqrt{\gamma \:[\Sigma^e_f]_{j,j}}, \: \forall j = 1, \ldots, n \bigg \}, \label{eq:PRS_parallel}
\end{align}
where $\Sigma^e_f$ is the steady-state covariance matrix that satisfies the Lyapunov equality
\begin{align}
	\Sigma_f^e = A_K \Sigma_f^e A_K^\top + \Sigma^w. \label{eq:steady_state_covariance_global}
\end{align}
The existence of the solution $\Sigma_f^e$ is guaranteed by Schur stability of $A_K$ and $\Sigma^w > 0$, which implies that the covariance sequence \eqref{eq:global_covariance} converges to $\Sigma_f^e$ as $t\rightarrow \infty$.
By tightening the constraint sets $\mathbb{X}$ and $\mathbb{U}$ with the PRS $\mathcal{R}_t$ and $\mathcal{R}_t^u$, it can easily be verified that the chance constraints \eqref{eq:local_chance_constraints} are satisfied in prediction if the nominal states and inputs satisfy the following conditions
\begin{subequations}
	\begin{align}
		&z(t|k) \in \mathbb{Z}_t \coloneqq \mathbb{X} \ominus \mathcal{R}_t \quad \forall t = 0, \ldots, N-1 \label{eq:global_nominal_constraints:state} \\
		&v(t|k) \in\mathbb{V}_t \coloneqq \mathbb{U} \ominus \mathcal{R}^u_t \quad \forall t = 0, \ldots, N-1. \label{eq:global_nominal_constraints:input}
	\end{align}
	\label{eq:global_nominal_constraints}
	\end{subequations}
\begin{remark}
	The t-step predictive covariance sequence $\Sigma^e(t|k)$ in \eqref{eq:PRS:state} is defined for each closed-loop time step $k$. However, since the distribution $\mathcal{Q}(0, \Sigma^w)$ is time invariant, it holds that $\Sigma^e(t|0) = \Sigma^e(t|1) = \ldots = \Sigma^e(t|k)$ for all $k\geq 0$. The resulting $t$-step PRS is therefore constant for each $k$.
\end{remark}
\subsection{Terminal set for tracking}
\label{sec:tracking_set}
In this section we define, as introduced by \cite{limon2008mpc}, a terminal set for tracking in the augmented state $\alpha = [ \Delta z^\top, \: z_s^\top, \: v_s^\top]^\top$ together with the augmented dynamics
\begin{align}
	\alpha(k+1) = \underbrace{\begin{bmatrix}
		A & 0 & 0\\
		0 & I & 0\\
		0 & 0 & I
	\end{bmatrix}}_{A_e} \alpha(k) + \underbrace{\begin{bmatrix}
	B \\ 
	0 \\
	0 
	\end{bmatrix}}_{B_e} \Delta v(k). \label{eq:augmented_system}
\end{align}
\begin{definition}
\label{def:invariant_set}
Consider the control law $\kappa_f(\Delta z) = K \Delta z$ and the augmented system $\alpha(k+1) = A_e \alpha(k) + B_e K \Delta z(k)$. The set $\mathbb{Z}_{tr} \subseteq \mathbb{R}^{2n + m}$ is an admissible invariant set for tracking if for all $\alpha \in \mathbb{Z}_{tr}$ it holds that:
\begin{subequations}
\begin{align}
	&A_e \alpha + B_e K \Delta z \in \mathbb{Z}_{tr} \label{eq:terminal_set:invariance_cond}  \\
	&\Delta z + z_s \in \mathbb{Z}_f \coloneqq \mathbb{X} \ominus \mathcal{R}, \quad K \Delta z + v_s \in \mathbb{V}_f \coloneqq \mathbb{U} \ominus \mathcal{R}^u, \label{eq:terminal_set:constr}
\end{align}
\end{subequations}
where $\mathcal{R}, \mathcal{R}^u$ are PRS according to Def. \ref{def:PRS}.
\label{def:terminal_set}
\end{definition}

\subsection{Initial constraints}
Ideally, we want to use always the latest state information $z(0|k) = x(k)$ to initialize the MPC optimization problem, which we call Mode $1$. However, the unboundedness of $w$ may renders the initial state infeasible in \eqref{eq:global_nominal_constraints:state}. To ensure the fundamental property of recursive feasibility we require a backup strategy (Mode $2$), where we utilize the shifted optimal solution from the previous time step $z(0|k) = z(1|k-1)$. The constraint is implemented as
 \begin{align}
 	z(0|k) = \{x(k), z(1|k-1) \}. \label{eq:initial_value}
 \end{align}
\subsection{MPC optimization problem}
The following MPC optimization problem is solved at every time instant $k \geq 0$ 
\begin{subequations}
\label{mpc_problem}
\begin{alignat}{2}
&\!\min_{\mathcal{Z},\mathcal{V},y_s}  &      &   \quad \eqref{eq:nominal_cost_function}  \\
&\text{s.t.} &      & \quad \eqref{eq:global:nominal_dynamics}, \eqref{eq:global_nominal_constraints:state}, \eqref{eq:global_nominal_constraints:input} \quad t=0,...,N-1 \nonumber\\
&                  &      & \quad \eqref{eq:steady_state_condition}, \eqref{eq:initial_value} \\
&                  &      & \quad (z(N|k) - z_s(k), z_s(k), v_s(k)) \in  \mathbb{Z}_{tr}, \label{eq:terminal_constraint}
\end{alignat}
\end{subequations}
where $\mathcal{V} = \{ v(0|k), \ldots, v(N-1|k) \}$ and $\mathcal{Z} = \{ z(0|k), \ldots, z(N|k) \}$ denote the input and state sequences.
\section{Distributed synthesis}
\label{sec:distributed_synthesis}
This section is dedicated to the distributed synthesis of distributed PRS and a distributed invariant set for tracking. For the resulting distributed online algorithm we need to ensure that the following quantities are structured:
\begin{enumerate}[A)]
	\item Dynamics \eqref{eq:local_nominal_dynamics} and steady-state condition \eqref{eq:steady_state_condition}
	\item Cost function \eqref{eq:nominal_cost_function} 
	\item Constraints \eqref{eq:global_nominal_constraints}
	\item Terminal set (Def. \ref{def:terminal_set})
\end{enumerate}
\subsection{Dynamics}
The dynamics \eqref{eq:local_nominal_dynamics} are structured by design  \eqref{eq:global:nominal_dynamics}. Similarly, we can decompose the central steady-state condition \eqref{eq:steady_state_condition}, such that each subsystem $i \in \mathcal{M}$ only requires information from its neighbors $j \in \mathcal{N}_i$.
\subsection{Cost function}
The matrices $Q$, $R$ and $T$ are assumed to be block diagonal. Assumption \ref{assum_stabilizable} ensures the existence of a structured tube-controller $K_{N_i}$, which in turn implies the existence of a block diagonal matrix $P = \text{diag}(P_1, \ldots, P_M)$ that satisfies \eqref{eq:lyapunov_eqn}. Thus, the cost function \eqref{eq:nominal_cost_function} is block separable.
\subsection{Constraints} 
\label{sec:synthesis:constraints}
Since the constraint sets \eqref{eq:local_chance_constraints} are structured by definition, it remains to find distributed PRS for the states and inputs. The design ultimately reduces to the computation of block diagonal upper bounds of the covariance sequence \eqref{eq:global_covariance}, i.e.
\begin{align}
\Sigma^e(t|k) \leq \hat{\Sigma}^e(t|k) = \text{diag}(\hat{\Sigma}_{1}^e(t|k), \ldots, \hat{\Sigma}_{M}^e(t|k)) \label{eq:block_diagonal_bound}
\end{align}
for all $t = 0, \ldots, N-1$. The design of such matrices can be carried out via distributed semidefinte programming (SDP) \cite{mark2020stochastic} or via iterative methods \cite{farina2016distributed, mark2019distributed}, where a modified version of the local covariance update equation
\begin{align*}
	\Sigma_i^e(t+1|k) = A_{\mathcal{N}_i,K} \Sigma^e_{\mathcal{N}_i}(t|k) A_{\mathcal{N}_i,K}^\top + \Sigma_i^w
\end{align*} 
is utilized. In this paper we chose the latter approach and define
\begin{align}
	\hat{\Sigma}_i^e(t+1|k) = \tilde{A}_{\mathcal{N}_i,K} \hat{\Sigma}^e_{\mathcal{N}_i}(t|k) \tilde{A}_{\mathcal{N}_i,K}^\top + \Sigma_i^w, \label{eq:modified_update_eq}
\end{align}
where $\tilde{A}_{\mathcal{N}_i,K} = \sqrt{|\mathcal{N}_i|} A_{\mathcal{N}_i,K}$ and $|\mathcal{N}_i|$ denotes the cardinality of the set of neighbors according to Def. \ref{def:neighboring_systems}. From \cite[Lem. 2]{farina2016distributed} we have that, if $\Sigma^e(t|k) \leq \hat{\Sigma}^e(t|k)$ and $ \hat{\Sigma}^e(t+1|k)$ is updated according to \eqref{eq:modified_update_eq}, then it holds that $\Sigma^e(t+1|k) \leq \hat{\Sigma}^e(t+1|k)$. Similarly we can upper bound the steady-state covariance \eqref{eq:steady_state_covariance_global} with $\Sigma_{f}^e \leq \hat{\Sigma}_{f}^e $.

Distributed PRS for the terminal set, as well as distributed $t$-step PRS for the state and input constraints can now analogously to \eqref{eq:PRS:state} and \eqref{eq:PRS_parallel} be obtained via $\hat{\Sigma}_{f,\mathcal{N}_i}$ and $\hat{\Sigma}_{\mathcal{N}_i}(t|k)$.
\begin{align*}
	\mathbb{Z}_{f, \mathcal{N}_i} &\coloneqq \mathbb{X}_{\mathcal{N}_i} \ominus \mathcal{R}_{{ \mathcal{N}_i}}, \quad \mathbb{V}_{f,i} \coloneqq \mathbb{U}_i \ominus \mathcal{R}^u_{i}\\
\mathbb{Z}_{t, \mathcal{N}_i} &\coloneqq \mathbb{X}_{\mathcal{N}_i} \ominus \mathcal{R}_{t,{\mathcal{N}_i}}, \quad \mathbb{V}_{t,i} \coloneqq \mathbb{U}_i \ominus \mathcal{R}^u_{t,i}.
\end{align*}
\subsection{Terminal set}
We follow the lines of \cite{conte2013cooperative} and define an ellipsoidal invariant set for tracking for the augmented system \eqref{eq:augmented_system}. Let $V_{tr}(\alpha)$ be a quadratic cost function 
\begin{align*}
	V_{tr}(\alpha) = \alpha^\top \begin{bmatrix}
	P_f & 0 & 0 \\
	0 & P_z & 0 \\
	0 & 0 & P_v
	\end{bmatrix} \alpha, 
\end{align*}
where $P_f \in \mathbb{R}^{n \times n}$, $P_z \in \mathbb{R}^{n \times n}$ and $P_v \in \mathbb{R}^{m \times m}$. Due to its block diagonal structure, this allows for a separation 
\begin{align*}
	V_{tr}(\alpha) = \sum_{i=1}^M \Delta z_i^\top P_{f,i} \Delta z_i + z_s^\top P_{z,i} z_{s,i} + v_{s,i}^\top P_{v,i} v_{s,i}.
\end{align*}
By Assumption \ref{assum_stabilizable} there exists a structured terminal controller $\Delta v = K \Delta z$, which renders any level set of
\begin{align*}
	\mathbb{Z}_{tr} = \{ \alpha \in \mathbb{R}^{2n + m} \: | \: V_{tr}(\alpha) \leq 1 \}
\end{align*}
invariant under the augmented dynamics \eqref{eq:augmented_system}. The matrices $P_{f,i}, P_{z,i}$ and $P_{v,i}$ can be synthesized for all systems $i \in \mathcal{M}$ distributedly via distributed optimization, e.g. with \cite[Thm. IV.2]{conte2013cooperative}, where the local PRS for terminal constraint tightening \eqref{eq:terminal_set:constr} are obtained from the procedure in Section \ref{sec:synthesis:constraints}. Note that due to Assumption \ref{assum:compact} the tracking ellipsoids $P_z$ and $P_v$ are finite. The invariance property of the terminal set for tracking can then be enforced distributedly, e.g. as the authors of \cite{conte2016distributed} have demonstrated.
\subsection{Distributed optimization based MPC}
In this paper, we use the Alternating direction method of multipliers (ADMM) \cite{boyd2011distributed} to solve the optimization problem \eqref{mpc_problem}. In \cite{conte2013robust}, the authors provided a corresponding formulation for distributed MPC, which we adopted for the numerical example. 
\subsection{Main result}
Before stating the main result, we need to characterize the set of admissible nominal steady-states and outputs that are constrained by \eqref{eq:global_nominal_constraints}.
\begin{definition}
The set of admissible nominal steady-states is given by
\begin{align*}
	\mathbb{Z}_s = \{ (z_s, u_s) \in (\mathbb{Z}_f \times \mathbb{V}_f) \: | \: (A - I) z_s + B v_s = 0 \}.
\end{align*}
The set of admissible outputs is given by
\begin{align*}
	\bar{\mathbb{Y}}_s = \{ C z_s \: | \: z_s \in \mathbb{Z}_s \}.
\end{align*}
\end{definition}
\begin{theorem}
\label{thm:main_result}
If the MPC optimization Problem \ref{mpc_problem} admits a feasible solution at time $k=0$, then it is recursively feasible and the chance constraints \eqref{eq:local_chance_constraints} are satisfied in closed-loop for any $k \geq 0$. Furthermore, if $y_{ref} \in \bar{\mathbb{Y}}_s$, then  
\begin{align*}
	\lim_{k \rightarrow \infty} \mathbb{E} \{ {y}(k) \} = y_{ref}.
\end{align*}
If $y_{ref} \notin \bar{\mathbb{Y}}_s$, then $\mathbb{E} \{ y(k) \}$ converges to the admissible output $\tilde{y}_s$ that minimizes the tracking cost
\begin{align*}
	\tilde{y}_s = \!\arg \: \!\min_{y_s \in \bar{\mathbb{Y}}_s} \Vert y_s - y_{ref} \Vert_T^2
\end{align*}
\end{theorem}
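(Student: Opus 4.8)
The plan is to establish the three claims — recursive feasibility, closed-loop chance constraint satisfaction, and convergence — in that order, following the standard template of tube-based stochastic MPC for tracking (as in \cite{limon2008mpc, mark2020stochastic}), adapted to the two-mode (conditional) initialization scheme \eqref{eq:initial_value}.

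\emph{Recursive feasibility.} Suppose Problem~\ref{mpc_problem} is feasible at time $k$ with optimal solution $(\mathcal{Z}^\star, \mathcal{V}^\star, y_s^\star)$. I would construct a feasible candidate at time $k+1$ via the usual shift: set $z(t|k+1) = z(t+1|k)$ and $v(t|k+1) = v(t+1|k)$ for $t = 0, \ldots, N-2$, append $z(N|k+1) = A_K z(N|k) + \ldots$ generated by the terminal controller $\Delta v = K\Delta z$, and keep $y_s(k+1) = y_s^\star(k)$. The nominal dynamics \eqref{eq:global:nominal_dynamics} and steady-state condition \eqref{eq:steady_state_condition} hold by construction. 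For the tightened constraints \eqref{eq:global_nominal_constraints}, I would use that the $t$-step PRS sequence is monotone and converges to the steady-state PRS $\mathcal{R}$ (from Schur stability of $A_K$ and $\Sigma^w>0$), so that $\mathbb{Z}_f \subseteq \mathbb{Z}_t$ and $\mathbb{V}_f \subseteq \mathbb{V}_t$ — hence the terminal set condition \eqref{eq:terminal_set:constr} guarantees the appended tail point, and the shifted points inherit feasibility; the terminal constraint \eqref{eq:terminal_constraint} at $k+1$ follows from the invariance property \eqref{eq:terminal_set:invariance_cond} of $\mathbb{Z}_{tr}$. The one subtlety is the initial constraint \eqref{eq:initial_value}: the candidate uses $z(0|k+1) = z(1|k)$, which is exactly the Mode~2 option, so the shifted candidate is always admissible regardless of whether $x(k+1)$ happens to lie in $\mathbb{Z}_0$. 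This is where the conditional initialization earns its keep, and I would state this explicitly.

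\emph{Closed-loop chance constraints.} Here I would argue that, along the closed-loop trajectory, the realized state satisfies $x(k) = z(0|k) + e(k)$ where, crucially, $e(k)$ is the error accumulated since the most recent time the MPC ran in Mode~1 (i.e. since $z(0|\cdot) = x(\cdot)$ was enforced, resetting the error to zero). Between such resets the error evolves as $e(t+1) = A_K e(t) + w(t)$ starting from $e=0$, so by Definition~\ref{def:t_step_PRS} and the construction \eqref{eq:PRS:state} of $\mathcal{R}_t$ via Chebyshev's inequality, $\mathbb{P}(e(t) \in \mathcal{R}_t) \geq p_x$ for every $t$; since feasibility of the MPC forces $z(0|k) \in \mathbb{Z}_{\cdot} = \mathbb{X} \ominus \mathcal{R}_\cdot$ (or the terminal set uses $\mathcal{R}$), we get $x(k) = z(0|k) + e(k) \in \mathbb{X}$ with probability at least $p_x$, and analogously $u(k) \in \mathbb{U}$ with probability $p_u$. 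The CCU assumption (Assumption~1) enters to ensure the per-coordinate Chebyshev bound aggregates correctly to the joint event via \cite[Thm.~1]{chen2007new} with $\gamma = n/(1-p_x)$. The care needed is handling the Mode-1/Mode-2 alternation cleanly: whenever Mode~1 is invoked the error resets, and whenever Mode~2 is used the bound $e(k)\in\mathcal R_t$ for the appropriate step index $t$ (counting from the last reset) still holds because the PRS are $t$-step reachable sets and, by the PRS property in Definition~\ref{def:PRS}, also contained in the infinite-horizon $\mathcal{R}$ when the terminal set is active.

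\emph{Convergence.} I would use the optimal value function $J^\star(k)$ of \eqref{eq:nominal_cost_function} as a Lyapunov function for the nominal (mean) subsystem. Plugging in the shifted candidate and using the Lyapunov equation \eqref{eq:lyapunov_eqn} for $P$ gives the standard descent inequality $J^\star(k+1) - J^\star(k) \leq -\Vert \Delta z(0|k)\Vert_Q^2 - \Vert \Delta v(0|k)\Vert_R^2$ whenever Mode~1 is active, and a non-increase in the Mode-2 steps; summability then forces $\Delta z(0|k) \to 0$, i.e. $z(0|k) \to z_s(k)$, along Mode-1 times, and one argues Mode~1 is eventually always active once the error-induced infeasibility of $x(k)$ in $\mathbb{Z}_0$ becomes a probability-zero / vanishing event — or, more cleanly, one shows $\mathbb{E}\{e(k)\} = 0$ so $\mathbb{E}\{x(k)\} = \mathbb{E}\{z(0|k)\} \to$ the steady state. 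To pin down \emph{which} steady state, note that the term $\Vert y_s(k) - y_{ref}\Vert_T^2$ in the cost is strictly convex in $y_s$ on the admissible set $\bar{\mathbb{Y}}_s$; a standard argument (e.g. as in \cite{limon2008mpc, ferramosca2013cooperative}) shows that any limit point of $y_s(k)$ must be the unique minimizer $\tilde y_s$ of this term over $\bar{\mathbb{Y}}_s$, since otherwise moving $y_s$ toward $\tilde y_s$ along a feasible homotopy would strictly decrease $J^\star$, contradicting convergence. When $y_{ref} \in \bar{\mathbb{Y}}_s$ this minimizer is $y_{ref}$ itself with zero cost, giving $\mathbb{E}\{\Vert y(k) - y_{ref}\Vert\} \to 0$.

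\emph{Main obstacle.} I expect the delicate part to be the interaction between the conditional (two-mode) initialization and the convergence argument: the value-function descent is only strict in Mode~1, so one must rule out the system getting "stuck" in Mode~2 forever. The cleanest route is to work with expectations — $\mathbb{E}\{e(k)\}=0$ makes $\mathbb{E}\{x(k)\}$ track the nominal trajectory directly, sidestepping the mode-switching issue for the convergence claim — while the almost-sure chance-constraint bound is handled separately via the $t$-step PRS counted from the last reset. Making these two viewpoints consistent, and verifying that the candidate construction respects \eqref{eq:initial_value} in both modes, is the crux.
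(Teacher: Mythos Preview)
Your recursive feasibility argument is correct and matches the paper. The remaining two parts, however, diverge from the paper's proof and contain gaps.

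\textbf{Convergence: the Mode~1 / Mode~2 roles are reversed.} You claim the shifted candidate yields the strict descent $J^\star(k+1)-J^\star(k)\le -\|\Delta z(0|k)\|_Q^2-\|\Delta v(0|k)\|_R^2$ ``whenever Mode~1 is active'' and only non-increase in Mode~2. It is the other way around. In Mode~2 the initial condition is \emph{fixed} to $z(0|k+1)=z^\star(1|k)$, so the shifted candidate is feasible and the Lyapunov identity \eqref{eq:lyapunov_eqn} gives the clean strict decrease. In Mode~1 the initial condition is $z(0|k+1)=x(k+1)$, which differs from $z^\star(1|k)$ by the random quantity $A_K e(k)+w(k)$; the shifted candidate is \emph{not} feasible for that initial condition, so no descent follows directly. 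The paper closes this gap by invoking Lipschitz continuity of $J^\star$ on the (bounded) feasible set,
\[
J^\star(x(k+1),\cdot)\le J^\star(z^\star(1|k),\cdot)+L\|x(k+1)-z^\star(1|k)\|_2,
\]
and then controlling the expected residual via the contraction $\|A_K e\|_P\le(1-\epsilon)\|e\|_P$ to obtain a descent-plus-noise inequality in expectation. Your ``main obstacle'' paragraph worries about getting stuck in Mode~2, but Mode~2 is the benign case; the actual missing ingredient is the Lipschitz-plus-contraction bound for Mode~1.

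\textbf{Closed-loop chance constraints.} Your ``count from the last Mode~1 reset'' idea is the right intuition but does not mesh with the constraint structure: if $\tau$ steps have elapsed since the last reset, the closed-loop error is distributed like $e(\tau|0)$, yet the MPC only enforces $z^\star(0|k)\in\mathbb{Z}_0$ (and $z^\star(0|k)=z^\star(1|k-1)\in\mathbb{Z}_1$ via the previous solve), not $z^\star(0|k)\in\mathbb{Z}_\tau$. Re-optimization at each step destroys the inheritance of tighter $t$-step tightenings. The paper instead proves an auxiliary lemma (its Lemma~2, building on \cite[Thm.~3]{hewing2018stochastic}) showing that, under the CCU assumption and convex symmetric PRS, the \emph{closed-loop} error satisfies $\mathbb{P}(e(k)\in\mathcal{R}_k)\ge\mathbb{P}(e(k|0)\in\mathcal{R}_k)$ --- Mode~1 resets can only improve concentration. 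This comparison to the open-loop prediction from $k=0$ is what legitimizes using the predicted-tightening design for closed-loop guarantees, and it is the step your outline does not supply.
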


\section{Numerical example}
\label{sec:example}
 \begin{figure*}[htbp]
\centering
\begin{minipage}{.33\textwidth}
  \centering
  \includegraphics[width=0.97\linewidth]{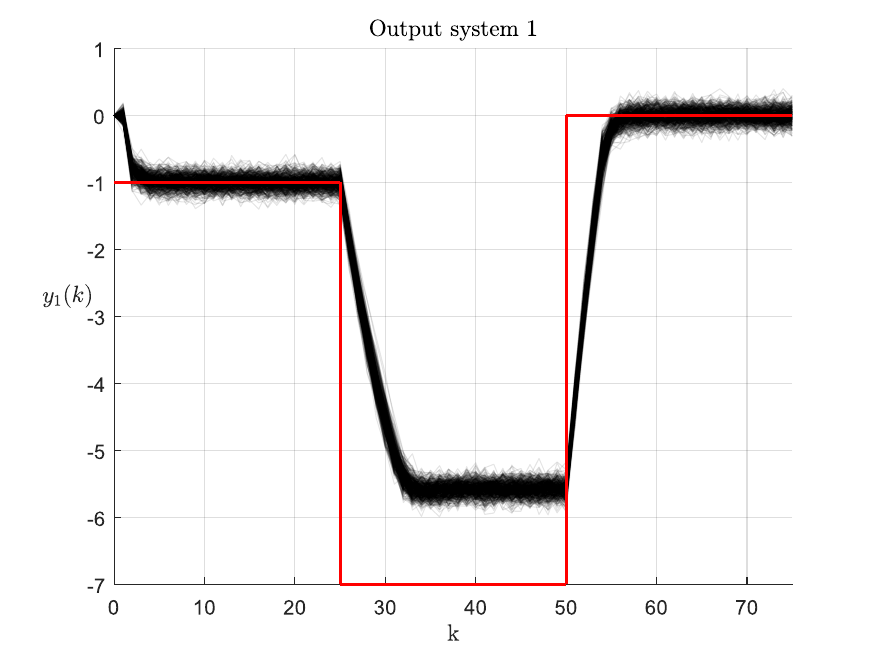}
\end{minipage}%
\begin{minipage}{.33\textwidth}
  \centering
  \includegraphics[width=0.97\linewidth]{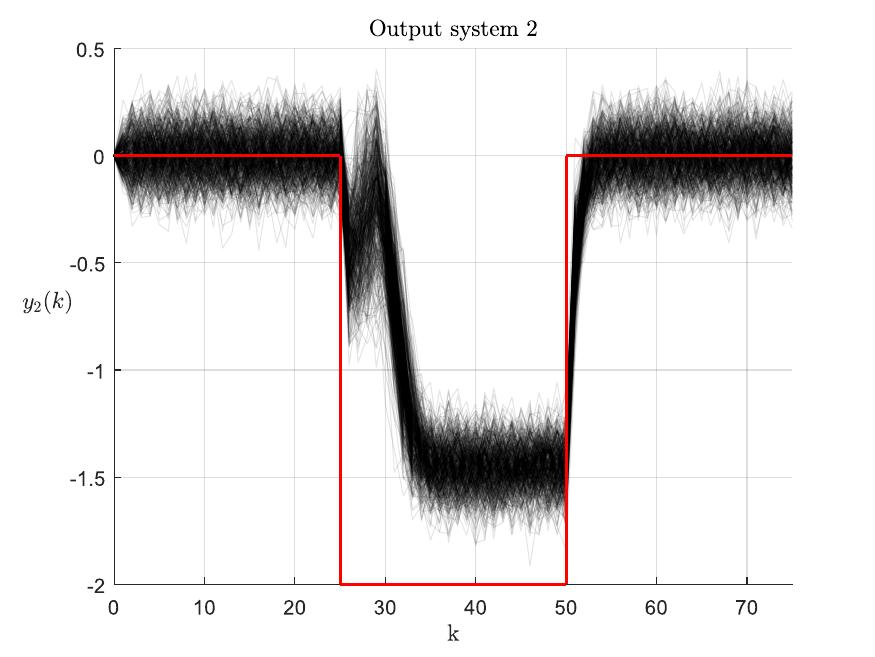}
\end{minipage}
\begin{minipage}{.33\textwidth}
  \centering
  \includegraphics[width=0.97\linewidth]{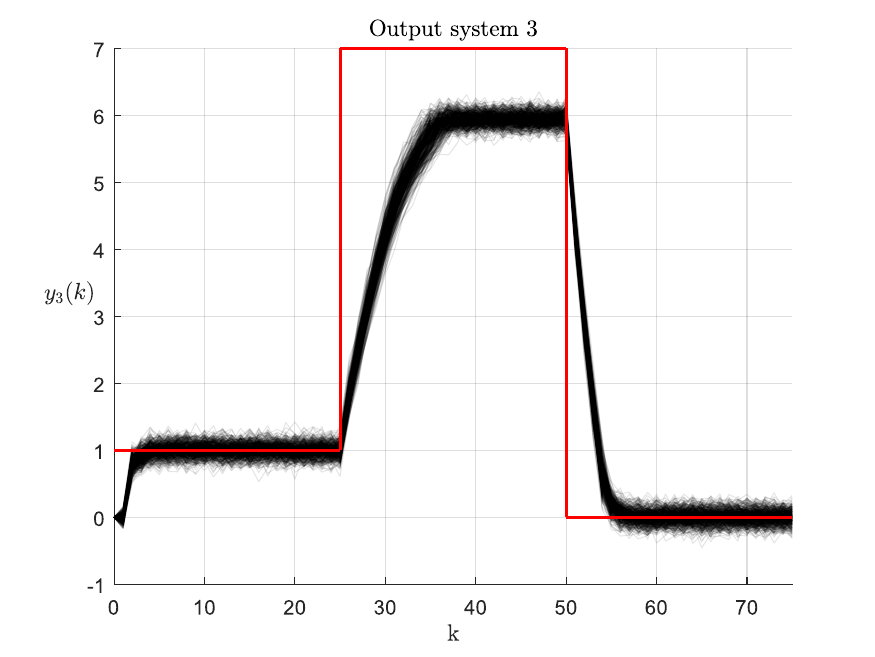}
\end{minipage}
\caption{$1000$ closed-loop output trajectories (black) and the corresponding reference signal (red).}
\label{fig:output}
\end{figure*}
This section is dedicated to a numerical example. We consider $M=3$ subsystems with neighbors $\mathcal{N}_i = \{1,2,3\} \: \forall i \in \mathcal{M}$, dynamic matrices  $A_{ii} = \left[ \begin{smallmatrix}
 	1 & 1\\
 	0 & 1
\end{smallmatrix} \right],
A_{ij} = \left[ \begin{smallmatrix}
	0.1 & 0\\
	0.1 & 0.1
\end{smallmatrix} \right], \forall j \in \mathcal{N}_i \backslash \{i\}, \forall i \in \mathcal{M}$, input matrices $
 B_{i} = \left[ \begin{smallmatrix}
  	0\\
 	1
\end{smallmatrix} \right], \forall i \in \mathcal{M}$ and output matrices $ C_{ii} = \left[ \begin{smallmatrix}
  	1 & 0
\end{smallmatrix} \right], \forall i \in \mathcal{M}$. Each subsystem is subject to a normally distributed process noise with $\Sigma_i^w = 0.004 I$ and a chance constraint on the second state $\mathbb{P}(| [x_i]_2 | \leq 1) \geq 0.7$. As already stated in Remark \ref{rem:compactness}, we need to introduce a large, but finite constraint on unconstrained states. In this example, we introduce a constraint on the nominal state $ |[z_i]_1| \leq 50$, such that the invariant set for tracking can be determined. The weighting matrices are set to $Q_i = \left[ \begin{smallmatrix}
 	100 & 0\\
 	0 & 0.01
\end{smallmatrix} \right]$, $R_i = 1$, $T_i = 10^3$ and the prediction horizon $N = 7$. The controller ingredients are computed according to Section \ref{sec:distributed_synthesis} and we used the ADMM formulation from \cite{conte2013robust} to solve the online optimization problem. Since the noise is normally distributed, we use Remark \ref{remark:quantile} to obtain a less conservative constraint tightening.
\subsection*{Simulation results}
We carried out $1000$ Monte-Carlo simulations for $75$ closed-loop steps, starting from the initial conditions $x_1(0) = [0, 0]^\top$, $x_2(0) = [0, 0]^\top$ and $x_3(0) = [0, 0]^\top$. For the first $0 \leq k < 25$ time steps we command an admissible reference $y_{ref}^1 = [-1, 0, 1]^\top$ followed by an unreachable reference $y_{ref}^{2} = [-7, -2, 7]^\top$ for $25 \leq k < 50$ and lastly an admissible reference $y_{ref}^{3} = [0,0,0]^\top$ for $k \geq 50$. 
\begin{figure}[H]
  \centering
  \includegraphics[width=0.9\linewidth]{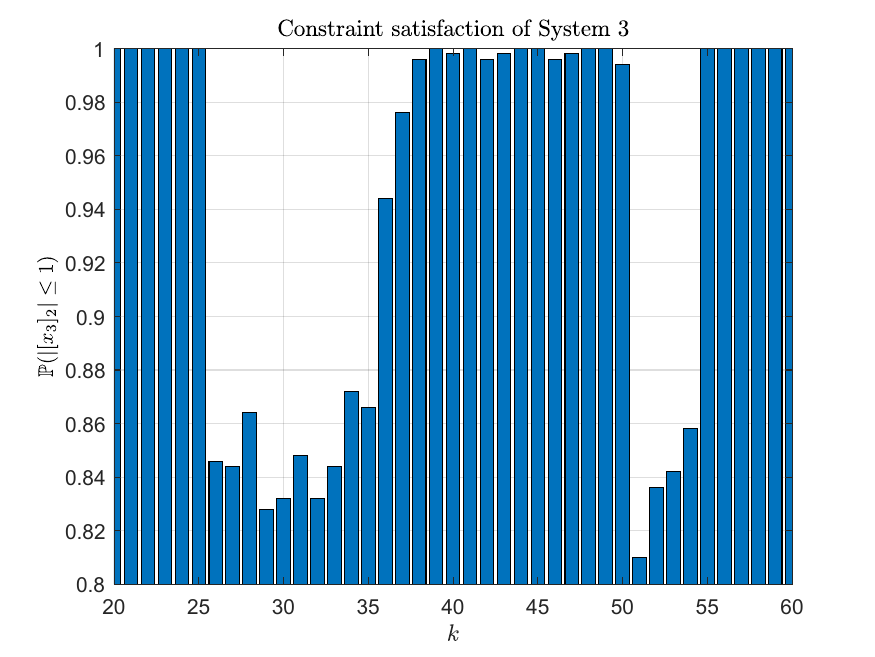}
  \caption{Constraint satisfaction of system 3}
  \label{fig:constraint}
\end{figure}
In Figure \ref{fig:output} it can be seen that the references $y_{ref}^{1}$ and $y_{ref}^{3}$ can be tracked in expectation, while the reference $y_{ref}^{2}$ is unreachable. However, the output converges to an admissible steady-state $\hat{y}_s = [-5.57, -1.45, 5.95]^\top$ that minimizes the distance to the true reference $y_{ref}^2$ in expectation, i.e. the tracking cost is $\mathbb{E} \{ J_o(\hat{y}_s, y_{ref}^{2}) \} = 3451.3$.

In Figure \ref{fig:constraint} we show the empirical constraint satisfaction of system $3$, since it is the most representative for the constraint violations in our setting. Furthermore, we cropped the picture, since the constraint violations on the second state only occur when setpoints are changed. It can be seen that the largest constraint violation is $19 \%$, which verifies that the chance constraint $\mathbb{P}( | [x_i]_2 | \leq 1) \geq 0.7$ is empirically satisfied. The gap between the required constraint satisfaction ($70 \%)$ and the empirical satisfaction rate ($81 \%$) can be deduced from the conservatism of the Chebyshev mean-variance PRS \eqref{eq:PRS:state} and the block diagonal covariance matrix \eqref{eq:block_diagonal_bound}, see also \cite{mark2020stochastic}.
\subsection{Conclusion}
We presented a distributed stochastic MPC for tracking of piecewise-constant output references. The formalism of mean-variance $t$-step PRS was utilized to guarantee closed-loop chance constraint satisfaction. Recursive feasibility is established by conditioning the MPC optimization problem on feasibility and by enforcing the augmented terminal state to be in a invariant set for tracking. Furthermore, for changing setpoints the MPC output is guaranteed to converge in expectation to nominal admissible steady-states. The paper closes with a numerical example, highlighting the tracking behavior and chance constraint satisfaction.

\bibliography{lib}
\bibliographystyle{IEEEtran}

\newpage

\begin{appendix}
\subsection{Auxiliary Lemmas for closed-loop chance constraints}
\begin{lemma}[\hspace{1sp}\cite{hewing2018stochastic}]
	\label{lem:nested}
	If $\mathcal{Q}(0, \Sigma^w)$ is central convex unimodal, any convex symmetric $k$-step PRS $\mathcal{R}_k$ is also a $k-1$ step PRS.
\end{lemma}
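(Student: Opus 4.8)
The plan is to exploit the purely additive structure of the error recursion~\eqref{eq:global:error} together with Anderson's lemma, which is the natural tool for the central convex unimodal (CCU) class. First I would unroll~\eqref{eq:global:error} from the initial condition $e(0|k)=0$ to obtain $e(t|k)=\sum_{i=0}^{t-1} A_K^{t-1-i}\,w(i|k)$. Since the $w(i|k)$ are independent copies of $w\sim\mathcal{Q}(0,\Sigma^w)$, peeling off the $i=0$ summand and reindexing the remainder gives the distributional identity
\begin{align*}
e(k|k) \overset{d}{=} A_K^{k-1} w' + e',
\end{align*}
where $w'\sim\mathcal{Q}(0,\Sigma^w)$ is independent of $e'$ and $e'\overset{d}{=}e(k-1|k)$.

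Second, I would establish that $e(k-1|k)$ has a CCU distribution: each summand $A_K^{k-2-i}w(i|k)$ is a linear image of the CCU vector $w(i|k)$ and hence CCU, and an independent sum of CCU vectors is again CCU --- both are standard closure properties of the CCU class~\cite{dharmadhikari1976multivariate}. With $e'$ CCU and $\mathcal{R}_k$ convex and symmetric, Anderson's lemma yields that translating $\mathcal{R}_k$ can only lose probability mass, i.e.\ $\mathbb{P}(e'+a\in\mathcal{R}_k)\le\mathbb{P}(e'\in\mathcal{R}_k)$ for every $a\in\mathbb{R}^n$.

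Third, I would condition on $w'$ and integrate, using independence:
\begin{align*}
\mathbb{P}\big(e(k|k)\in\mathcal{R}_k\big)
&= \mathbb{E}_{w'}\big[\,\mathbb{P}\big(e'+A_K^{k-1}w'\in\mathcal{R}_k \,\big|\, w'\big)\,\big] \\
&\le \mathbb{E}_{w'}\big[\,\mathbb{P}\big(e'\in\mathcal{R}_k\big)\,\big]
= \mathbb{P}\big(e(k-1|k)\in\mathcal{R}_k\big).
\end{align*}
Since $\mathcal{R}_k$ is a $k$-step PRS of level $p$, the left-hand side is at least $p$ (under $e(0|k)=0$), so $\mathbb{P}(e(k-1|k)\in\mathcal{R}_k)\ge p$, which is exactly the claim that $\mathcal{R}_k$ is also a $(k-1)$-step PRS. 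An immediate induction then recovers the stronger intuition that $\mathcal{R}_k$ is a $j$-step PRS for every $0\le j\le k$.

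The main obstacle is not the probabilistic inequality itself but making the CCU bookkeeping airtight: verifying that a possibly rank-deficient linear image of a CCU vector stays CCU, that convolution preserves CCU, and that Anderson's lemma applies in this mixed multivariate generality --- points I would simply attribute to~\cite{dharmadhikari1976multivariate}. If one prefers to sidestep these, an alternative is to use the explicit box shape of $\mathcal{R}_k$ in~\eqref{eq:PRS:state} together with the diagonal monotonicity $[\Sigma^e(k-1|k)]_{j,j}\le[\Sigma^e(k|k)]_{j,j}$ implied by~\eqref{eq:global_covariance}, and compare the two probabilities coordinate-wise via one-dimensional symmetric unimodality of each marginal; I would nonetheless keep the Anderson-lemma route as primary because it does not depend on the particular geometry of $\mathcal{R}_k$.
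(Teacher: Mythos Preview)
The paper does not supply its own proof of this lemma; it is quoted as a result from \cite{hewing2018stochastic}. Your proposal reproduces what is essentially the standard argument behind that citation: write the $k$-step error as an independent additive perturbation of a vector distributed like the $(k{-}1)$-step error, invoke closure of the CCU class under linear images and convolution, and apply Anderson's inequality to conclude that the perturbation can only decrease the mass inside a convex symmetric set. The chain $\mathbb{P}(e(k)\in\mathcal{R}_k)\le\mathbb{P}(e(k{-}1)\in\mathcal{R}_k)$ together with the $k$-step PRS property then gives the claim. This is correct.

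Two minor remarks. First, you overload $k$ as both the closed-loop time (second slot of $e(\cdot|k)$) and the PRS step index, so that $e(k|k)$ and $e(k-1|k)$ appear; the mathematics survives, but fixing the closed-loop time and letting only the prediction index vary would be cleaner. Second, the paper never states independence of the noise across time explicitly, only that $w(t|k)\overset{d}{=}w(t+k)$; your argument tacitly needs i.i.d.\ noise for the peel-off/reindex step and for the conditioning on $w'$, so you should flag this as an (entirely standard) assumption. Your alternative route via the monotonicity of the diagonal of $\Sigma^e(t|k)$ and the explicit box shape~\eqref{eq:PRS:state} also works for the specific PRS used in the paper, but, as you note, it is less general than the Anderson route and would not cover an arbitrary convex symmetric $\mathcal{R}_k$.
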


\begin{lemma}
		\label{lem:closed_loop}
	Let $\mathcal{Q}(0, \Sigma^w)$ be central convex unimodal and $\mathcal{R}_k$ be a convex symmetric $k$-step PRS. For system \eqref{eq:real_system} under control law \eqref{eq:tube_controller} resulting from \eqref{mpc_problem}, we have that
	\begin{align*}
			\mathbb{P}(e(k) \in \mathcal{R}_{k}) \geq \mathbb{P}(e(k|0) \in \mathcal{R}_{k})
	\end{align*}
for all $k \geq 0$, conditioned on $e(0) = e(0|0) = 0$.
\end{lemma}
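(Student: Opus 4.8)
The plan is to write the realised closed-loop error as a random-length partial sum of the disturbances and then transfer the probabilistic guarantee of the open-loop prediction error $e(k|0)$ to it via the monotonicity of PRS in Lemma~\ref{lem:nested}. \emph{Step 1 (closed-loop error recursion).} Substituting the tube controller~\eqref{eq:tube_controller} into~\eqref{eq:real_system} and subtracting the nominal propagation~\eqref{eq:nominal_dynamics}, exactly as in the derivation of~\eqref{eq:error_state}, gives $x(k+1)-z(1|k)=A_K\,(x(k)-z(0|k))+w(k)$. Writing $e(k):=x(k)-z(0|k)$ and distinguishing the two branches of~\eqref{eq:initial_value}, one obtains $e(k+1)=A_Ke(k)+w(k)$ when Mode~2 is active at $k+1$ and $e(k+1)=0$ when Mode~1 is active. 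Since the conditioning $e(0)=e(0|0)=0$ means Mode~1 is used at $k=0$, the index $k_1:=\max\{\kappa\le k:\ \text{Mode }1\text{ used at }\kappa\}$ is well defined, and telescoping the recursion from time $k_1$ yields $e(k)=\sum_{i=k_1}^{k-1}A_K^{\,k-1-i}w(i)$ (the empty sum when $k_1=k$).

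\emph{Step 2 (comparison with the open-loop error).} Since $w(0),\dots,w(k-1)$ are i.i.d.\ copies of $w$, conditioned on $\{k_1=\kappa\}$ the partial sum above has the same law as the open-loop error after $k-\kappa$ steps, i.e.\ $e(k)\mid\{k_1=\kappa\}\overset{d}{=}e(k-\kappa\,|\,0)$. Put $p^\star:=\mathbb{P}(e(k|0)\in\mathcal{R}_k)$, so that $\mathcal{R}_k$ is a $k$-step PRS of level $p^\star$. As $\mathcal{R}_k$ is convex and symmetric by assumption, Lemma~\ref{lem:nested} applies; applying it $\kappa$ times shows that $\mathcal{R}_k$ is also a $(k-\kappa)$-step PRS of the same level, that is $\mathbb{P}(e(k-\kappa|0)\in\mathcal{R}_k)\ge p^\star$ for every $0\le\kappa\le k$.

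\emph{Step 3 (conclusion).} Combining the two steps via the law of total probability,
\begin{align*}
\mathbb{P}(e(k)\in\mathcal{R}_k)&=\sum_{\kappa=0}^{k}\mathbb{P}\big(e(k)\in\mathcal{R}_k\mid k_1=\kappa\big)\,\mathbb{P}(k_1=\kappa)\\
&=\sum_{\kappa=0}^{k}\mathbb{P}\big(e(k-\kappa|0)\in\mathcal{R}_k\big)\,\mathbb{P}(k_1=\kappa)\ \ge\ p^\star=\mathbb{P}(e(k|0)\in\mathcal{R}_k),
\end{align*}
which is the claim.

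\emph{Expected main obstacle.} The delicate point is the distributional identity used in Step~2. The random reset index $k_1$ is fixed by the Mode~1/Mode~2 decisions, which depend on the realised states and therefore on the very disturbances $w(0),\dots,w(k-1)$ entering $e(k)$, so the event $\{k_1=\kappa\}$ is \emph{not} independent of those summands. A fully rigorous argument must therefore condition on the information available immediately after the last reset — exploiting that whether Mode~1 is triggered at a given time is measurable with respect to the past and that the post-reset noise is i.i.d.\ and zero-mean — and then verify that the central convex unimodality invoked through Lemma~\ref{lem:nested} is preserved under this conditioning. Steps~1 and~3 are routine by comparison.
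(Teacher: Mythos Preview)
Your overall strategy—reduce to the time elapsed since the last reset and then invoke the monotonicity of $t$-step PRS from Lemma~\ref{lem:nested}—is natural, and your diagnosis of the weak point is exactly right. Unfortunately that obstacle is not a technicality that can be patched afterwards: the distributional identity in Step~2 is genuinely false. The event $\{k_1=\kappa\}$ contains, in particular, the requirement that Mode~1 was \emph{infeasible} at each of the times $\kappa+1,\dots,k$; feasibility of Mode~1 at time $j$ is decided by $x(j)$ and hence by $w(\kappa),\dots,w(j-1)$. Conditioning on repeated infeasibility therefore biases precisely the disturbances that appear in your partial sum, typically toward large realisations, so $e(k)\mid\{k_1=\kappa\}$ is \emph{not} distributed as $e(k-\kappa\,|\,0)$, and there is no reason the one-sided inequality you need should survive either. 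The law-of-total-probability computation in Step~3 then no longer delivers the claim.

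The paper sidesteps this by a different decomposition. Rather than conditioning on the entire reset pattern up to time $k$, it chains one-step comparisons: for a convex symmetric set $\mathcal{R}$ and every $n=0,\dots,k-1$,
\[
\mathbb{P}\big(e(n\,|\,k-n)\in\mathcal{R}\big)\ \ge\ \mathbb{P}\big(e(n+1\,|\,k-n-1)\in\mathcal{R}\big),
\]
quoted from \cite[Thm.~3]{hewing2018stochastic}. The two sides differ only through a possible reset at the single instant $k-n$; that reset decision is measurable with respect to $w(0),\dots,w(k-n-1)$, while the remaining disturbances are independent of it, and the CCU hypothesis (via Anderson's inequality) then gives the displayed bound. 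Telescoping with $\mathcal{R}=\mathcal{R}_k$ yields the lemma directly; Lemma~\ref{lem:nested} is invoked only to justify that the cited result, stated for a PRS $\mathcal{R}$, carries over to the finite-horizon set $\mathcal{R}_k$. Your route could in principle be repaired by inserting this same Anderson-type comparison at every reset instant, but once you do so the argument collapses into the paper's telescoping proof—the ``last reset time'' bookkeeping buys nothing extra.
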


\begin{proof}
The proof relies on the result \cite[Theorem 3]{hewing2018stochastic}, where it is shown that a PRS $\mathcal{R}$ satisfies
\begin{align}
	\mathbb{P}(e(n| k-n) \in \mathcal{R}) \geq \mathbb{P}(e(n+1|k-n-1) \in \mathcal{R}) \label{proof:guarantee}
\end{align}
for $n = 0, \ldots, k-1$. To show that the same holds true for $n$-step PRS, we use the fact that $\mathcal{Q}(0, \Sigma^w)$ is CCU and $\mathcal{R}_n$ is convex symmetric. Thus, Lemma \ref{lem:nested} implies that the sequence of $n$-step PRS is nested
\begin{align*}
	\mathcal{R}_0 \subseteq \mathcal{R}_1 \subseteq \ldots \subseteq \mathcal{R}_n \subseteq \mathcal{R} = \lim_{n \rightarrow \infty} R_n.
\end{align*}
In view of this, any PRS $\mathcal{R}$ is an $\infty$-step PRS and thus, the guarantees from  \eqref{proof:guarantee} carry over to $n$-step PRS, that is
 \begin{align*}
 	\mathbb{P}(e(n| k-n) \in \mathcal{R}_{n+1}) \geq \mathbb{P}(e(n+1|k-n-1) \in \mathcal{R}_{n+1}),
 \end{align*}
for all $n = 0, \ldots, k-1$. 
\end{proof}

\subsection{Proof of Theorem \ref{thm:main_result}}
The proof consists of three parts. First, recursive feasibility of the MPC problem is established. Afterwards, convergence of the states trajectories to the artificial steady-states is proven. In the last part we show that the artificial steady-states converge to the optimal admissible steady-state.
\subsection*{Recursive feasibility}
The first part of the proof verifies the recursive feasibility of the proposed controller. Let $(z_s(k), v_s(k))$ be an admissible steady-state pair consistent with output $y_s(k)$ that satisfies \eqref{eq:steady_state_covariance_global}. Assume that at time $k$ a solution to \eqref{mpc_problem} exists with optimal input and state sequences $\mathcal{V}(k) = \{ v(0|k), \ldots, v(N-1|k) \}$ and $\mathcal{Z}(k) = \{ z(0|k), \ldots, z(N|k) \}$. 

Now at time $k+1$ we consider the possible suboptimal initialization due to infeasibility in Mode $1$. Thus, we shift the state and input sequences by one time step
\begin{subequations}
\begin{align}
	\tilde{z}(t|k+1) &= z(t + 1|k) \quad \forall t = 0, \ldots, N - 1 \label{eq:shifted:state}\\
	\tilde{v}(t|k+1) &= v(t + 1|k) \quad \forall t = 0, \ldots, N - 2 \label{eq:shifted:input}
\end{align}
\end{subequations}
and append the terminal controller $\tilde{v}(N|k+1) = v_s + K (z(N|k) - z_s(k))$.

 In view of feasibility at time $k$ we have that the state and input constraints \eqref{eq:global_nominal_constraints} are satisfied for all $t = 0, \ldots, N-2$. At time $t = N - 1$, the terminal constraint \eqref{eq:terminal_constraint} ensures that the augmented state $\tilde{\alpha}(N-1|k+1) = \alpha(N|k)$ lies in the terminal set. In view of the invariance property \eqref{eq:terminal_set:invariance_cond} under the terminal controller $\tilde{v}(N|k+1)$ also the successor $\tilde{\alpha}(N|k+1) \in \mathbb{Z}_{tr}$. Hence, by definition of the terminal set, in particular \eqref{eq:terminal_set:constr}, the constraints \eqref{eq:global_nominal_constraints} are verified for all future times, i.e. the MPC problem \eqref{mpc_problem} is recursively feasible.
\subsection*{Closed-loop chance constraints}
In view of recursive feasibility it is proven that the nominal constraints \eqref{eq:global_nominal_constraints} are verified for all times $k\geq 0$, i.e. the chance constraints \eqref{eq:local_chance_constraints} are verified in prediction. Closed-loop chance constraint satisfaction follows by application of Lemma \ref{lem:closed_loop}.
\subsection*{Convergence}
At time $k+1$, we distinguish between initialization of $z(0|k+1)$ in Mode $1$ ($M^1$) and Mode $2$ ($M^2$) due to \eqref{eq:initial_value}, i.e.
\begin{align}
	&\mathbb{E} \{ J^*(z(k+1), y_s(k+1)) \} \nonumber \\
	=& \mathbb{E} \{ J^*(z(k+1), y_s(k+1))\: | \: M^2 \} \:\text{Pr}(M^2) \nonumber \\
	+& \mathbb{E} \{ J^*(z(k+1), y_s(k+1)) \:|\: M^1 \} \: \text{Pr}(M^1). \label{eq:proof:seperation}
\end{align}
The first term can be evaluated w.r.t. the suboptimal (shifted) solution \eqref{eq:shifted:state} and \eqref{eq:shifted:input}
\begin{align}
	&\mathbb{E}\{ J^*(z(k+1), y_s(k+1))\: | \: M^2 \}  = J^*( z(1|k), y_s(k+1) ) \nonumber \\
	&\leq J(\tilde{z}(\cdot|k+1), \tilde{v}(\cdot|k+1), y_s(k)) \label{eq:proof:mode2_result}
\end{align}
and the shifted suboptimal cost satisfies
\begin{align*}
&J(\tilde{z}(\cdot|k+1), \tilde{v}(\cdot|k+1), y_s(k)) \nonumber\\
&= J^*(z(k), y_s(k) ) - \Vert \Delta z(0|k) \Vert_Q^2 - \Vert \Delta v(0|k) \Vert_R^2 \nonumber\\
& + \Vert \Delta z(N|k) \Vert_{Q}^2 +  \Vert \Delta z(N|k) \Vert_{K^\top R K}^2 \\
& - \Vert \Delta z(N|k) \Vert_P^2 + \Vert A_K \Delta z(N|k) \Vert_P^2 \nonumber
\\
& \overset{\eqref{eq:lyapunov_eqn}}{=} J^*(z(k), y_s(k)) - \Vert \Delta z(0|k) \Vert_Q^2 - \Vert \Delta v(0|k) \Vert_R^2.
\end{align*}
For the second term in \eqref{eq:proof:seperation} we have
\begin{align}
&\mathbb{E} \big\{ J^*(z(k+1), y_s(k+1)) \: | \: M^1 \big \} \nonumber \\
= &\mathbb{E} \big \{ J^*(x(k+1), y_s(k+1)) \:| \: M^1 \big \} \nonumber \\
\leq& J^*(z(1|k), y_s(k+1)) + L \mathbb{E} \big \{ \Vert x(k+1) - z(1|k) \Vert_2 \: | \: M^1 \big \} \nonumber \\ 
\leq& J(\tilde{z}(\cdot|k+1), \tilde{v}(\cdot|k+1), y_s(k) ) \nonumber\\ 
& \quad + L \mathbb{E} \big \{ \Vert x(k+1) - z(1|k) \Vert_2 \: | \: M^1 \big \}, \label{eq:proof:mode1_result}
\end{align}
where the first inequality follows from
\begin{multline*}
J^*(x(k+1), y_s(k+1)) \leq \\
 J^*(z(1|k), y_s(k+1)) + L \Vert x(k+1)-z(1|k) \Vert_2.
\end{multline*}
The Lipschitz constant $L$ exists if the feasible region of \eqref{mpc_problem} is bounded, which is usually the case in constrained control. Similar arguments have been used in \cite{hewing2018stochastic, mark2020stochastic}. The second inequality is due to the shifted suboptimal cost.

 Adding $L \mathbb{E} \big \{ \Vert x(k+1) - z(1|k) \Vert_2 \: | \: M^2 \big \}$ to \eqref{eq:proof:mode2_result} and substituting it together with \eqref{eq:proof:mode1_result} into \eqref{eq:proof:seperation}, we obtain
 \begin{align*}
 &\mathbb{E} \big \{ J^*(z(k+1), y_s(k+1)) \big \} \\
 \leq & J(\tilde{z}(\cdot|k+1), \tilde{v}(\cdot|k+1), y_s(k)) \\
 & \quad + L \mathbb{E} \big \{ \Vert x(k+1) - z(1|k) \Vert_2 \big \} \\ 
 =& J^*(z(k), y_s(k)) - \Vert \Delta z(0|k) \Vert_Q^2 - \Vert \Delta v(0|k) \Vert_R^2 \\ 
 & \quad +  L \mathbb{E} \big \{ \Vert x(k+1) - z(1|k) \Vert_2 \big \}.
  \end{align*} 
 Thus, the expected cost decrease is given by 
 \begin{align}
 &\mathbb{E} \big \{ J^*(z(k+1), y_s(k+1)) \big \} - J^*(z(k), y_s(k)) \nonumber \\
\leq &- \Vert \Delta z(0|k) \Vert_Q^2 - \Vert \Delta v(0|k) \Vert_R^2 \nonumber \\
 &\quad + \underbrace{L/(\sqrt{\lambda_{\text{min}}(P)})}_{= \beta} \mathbb{E} \big \{ \Vert x(k+1) - z(1|k) \Vert_P \big \}, \label{eq:proof_cost_dec}
  \end{align}
  where we used the relation $\lambda_{\text{min}}(P) \Vert x \Vert_2^2 \leq \Vert x \Vert_P^2$.
 Following the lines of \cite{hewing2018stochastic}, the latter term can be further simplified as
 \begin{align*}
  &\mathbb{E} \big \{ \Vert x(k+1) - z(1|k) \Vert_P \big \} \leq  \mathbb{E} \big \{ \Vert A_K e(k) + w(k)\Vert_P \big \} \\
  &\leq \Vert A_K e(k) \Vert_P + \mathbb{E}(\Vert w(k) \Vert_P) \\
  &\leq (1-\epsilon) \Vert e(k) \Vert_P + \mathbb{E}( \Vert w(k) \Vert_P,
 \end{align*}
  where second inequality uses
 \begin{align*}
 	\Vert A_K e(k) \Vert_P - \Vert e(k) \Vert_P \leq - \epsilon \Vert e(k) \Vert_P
 \end{align*}
 and $P$ denotes the solution to the Lyapunov equation \eqref{eq:lyapunov_eqn} for some $\epsilon > 0$. Thus, combining the latter with \eqref{eq:proof_cost_dec}, we obtain
  \begin{align*}
 &\mathbb{E} \big \{ J^*(z(k+1), y_s(k+1)) \big \} - J^*(z(k), y_s(k)) \nonumber \\
\leq &- \Vert \Delta z(0|k) \Vert_Q^2 - \Vert \Delta v(0|k) \Vert_R^2 \nonumber  - \epsilon \beta \Vert e(k) \Vert_P + \beta \mathbb{E} \big \{ \Vert w(k) \Vert_P \big \}.
  \end{align*}  
Using standard arguments, we conclude that $\displaystyle \lim_{k \rightarrow \infty} \mathbb{E} \{ z(0|k) - z_s(k) \} = 0$ and $\displaystyle \lim_{k \rightarrow \infty} \mathbb{E} \{ v(0|k) - v_s(k) \} = 0$, i.e. the states and inputs converge in expectation to a stable operating point $y_s(k) \in \bar{\mathbb{Y}}_s$, such that $(0, z_s(k), v_s(k)) \in \mathbb{Z}_{tr}$.
 
\subsection*{Optimality of the steady-state}
The previous section established that the nominal states and inputs converge to an artificial operating point $(z_s(k), v_s(k))$ in expectation. However, as $k \rightarrow \infty$ it can be shown, e.g. \cite[Lemma 1]{limon2010robust}, that $(z_s(k), v_s(k))$ converge to the optimal admissible steady-state pair $(z_s^*, v_s^*)$ that corresponds to the optimal admissible reference $y_s^*$. 

The first assertions can now be proved. Let $y_{ref} \in \bar{\mathbb{Y}}_s$, then by \cite[Lemma 1]{limon2010robust} the artificial tracking target $y_s(k)$ converges to $y_s^*$ as $k \rightarrow \infty$. Since $y_{ref}$ is admissible, $y_s^* \rightarrow y_{ref}$, which implies that $\displaystyle \lim_{k \rightarrow \infty} \mathbb{E} \{ {y}(k)\} = y_{ref} $.

The second assertion can be shown as follows. Let $y_{ref} \notin \bar{\mathbb{Y}}_s$, then, due to the terminal set for tracking \eqref{eq:terminal_constraint} and \cite[Lemma 1]{limon2010robust}, the artificial tracking target $y_s(k)$ converges to the optimal admissible operating point $y_s^* \in \bar{\mathbb{Y}}_s$ as $k \rightarrow \infty$. The cost associated with that optimal steady-state is
\begin{align*}
	J_o(y_s^*, y_{ref}) = \Vert  y_s^* - y_{ref} \Vert_T^2.
\end{align*}
Since $y_s^*$ is optimal, it follows from convexity of $J_o$ that $\forall y_s \in \bar{\mathbb{Y}}_s \backslash \{ y_s^* \} : J_o(y_s, y_{ref}) > J_o(y_s^*, y_{ref})$. Thus, $y_s^* = \tilde{y}_s$ is the unique minimizer of 
\begin{align*}
	\tilde{y}_s = \!\arg \: \!\min_{y_s \in \bar{\mathbb{Y}}_s} \Vert y_s - y_{ref} \Vert_T^2.
\end{align*}
Similar results have been reported in \cite[Theorem 1]{limon2010robust}. $\quad\blacksquare$

\end{appendix}

\vspace{12pt}
\end{document}